\newtheorem{thm}{Theorem}
\newtheorem{pro}{Proposition}
\newtheorem{rem}{Remark}
\numberwithin{equation}{section} \numberwithin{lem}{section}
\numberwithin{thm}{section} \numberwithin{cor}{section}
\numberwithin{pro}{section} \numberwithin{rem}{section}
\begin{document}
\title[On the weak separation limit of a two-component BEC]{On the weak separation limit of a two-component Bose-Einstein condensate}
\author{Christos Sourdis}
\address{Department of Mathematics,  University of
Turin,   Via Carlo Alberto 10,
20123, Turin, Italy.}
\email{christos.sourdis@unito.it}

\date{\today}
\begin{abstract}
This paper deals with the study of the behaviour of the wave functions of a two-component
 Bose-Einstein condensate
  in the case of weak segregation. This
  amounts to the study of the asymptotic behaviour of a heteroclinic connection in a  conservative Hamiltonian system of two coupled second order ODE's,  as the strength of the coupling tends to its infimum.
For this purpose, we apply geometric singular perturbation theory.
\end{abstract}
 \maketitle


\section{Introduction}\label{secIntro}
\subsection{The problem}
We consider the following \emph{heteroclinic connection problem}:
\begin{equation}\label{eqEqGen}
	\left\{
	\begin{array}{rcl}
	\lambda^2 \ddot{u}&=&u^3-u+\Lambda v^2 u, \\
&&	\\
	  \ddot{v}&=&v^3-v+\Lambda u^2 v;
	\end{array}
	\right.
	\end{equation}
\begin{equation}\label{equvPos}
u,v>0;
\end{equation}
	\begin{equation}\label{eqBdryGen}
	(u,v )\to  (0,1)\ \textrm{as}\ z\to -\infty,\ \ (u,v)\to
	(1,0)\ \textrm{as}\ z\to  +\infty,
	\end{equation}
 for values of the parameter \[\Lambda>1,\] where for the constant $\lambda$ we may assume  without loss of generality  that $\lambda \geq 1$.

This problem arises in the study of two-component Bose-Einstein condensates in the case of segregation, see \cite{aftalionSourdis} and the references therein, but also in the study of certain amplitude equations (see \cite{pismen2006patterns,van2000domain}).

 The heteroclinic connection problem (\ref{eqEqGen})-(\ref{equvPos})-(\ref{eqBdryGen}) always admits a solution
which minimizes the associated enegy in Proposition \ref{proEnergy} below (see \cite{alamaARMA15} and \cite{van2000domain}). This type of heteroclinics enjoy the following monotonicity property:
\begin{equation}\label{eqmonotS}
\dot{u}>0,\  \dot{v}<0,
\end{equation}
(actually this is an implication of their stability, see \cite{alamaARMA15}); in the special case where $\lambda=1$, it also holds that the function \begin{equation}\label{eqARCT}\arctan(v/u)\ \ \textrm{is\ decreasing}\end{equation}  and $u(z+z_0)\equiv v(z_0-z)$ for some $z_0 \in \mathbb{R}$ (see \cite{van2000domain}). Moreover, any solution of (\ref{eqEqGen}), (\ref{eqBdryGen}) satisfies $u^2+v^2<1$ (see \cite{alamaARMA15}) and the hamiltonian identity
\begin{equation}\label{eqHam}
\lambda^2\frac{(\dot{u})^2}{2}+\frac{(\dot{v})^2}{2}-\frac{(1-u^2-v^2)^2}{4}-\frac{\Lambda-1}{2}u^2v^2\equiv 0.
\end{equation}
Remarkably, if there were more general coefficients in (\ref{eqEqGen}), then they could be absorbed in $\lambda, \Lambda$ by a rescaling, as they would have to satisfy a balancing condition in order for the corresponding heteroclinic solutions to exist (see the introduction of \cite{aftalionSourdis}).

It was shown recently in \cite{aftalionSourdis} that solutions of (\ref{eqEqGen})-(\ref{equvPos})-(\ref{eqBdryGen})   satisfying the monotonicity property (\ref{eqmonotS}) are unique up to translations; interestingly enough, it was also shown that   the monotonicity of just one of the components is enough to reach the same conclusion.

There are two singular limits associated with (\ref{eqEqGen})-(\ref{equvPos})-(\ref{eqBdryGen}): $\Lambda \to +\infty$ and $\Lambda \to 1^+$
which are called the strong and the weak separation limit, respectively. Both limits were studied formally in \cite{barankov} (see also \cite{vaninterface} and \cite{malomed1990domain} for more formal arguments in the strong and weak separation limits, respectively). In particular, it was predicted therein that the components of an energy minimizing solution satisfy $uv\to 0$ and $u^2+v^2\to 1^-$, at least pointwise, as $\Lambda \to +\infty$ and $\Lambda \to 1^+$, respectively.
The strong separation limit was studied rigorously and in great detail recently in \cite{aftalionSourdis}. The scope of the current article is to study rigorously the weak separation limit, i.e., \[\Lambda \to 1^+.\] To the best of our knowledge, the only rigorous result in this direction is contained in the recent paper \cite{goldman2015phase}, where the authors employed   $\Gamma$-convergence techniques to obtain a first order asymptotic expansion of the minimal energy.

It turns out that, in contrast to the strong separation limit, here  we can apply by now standard arguments from geometric singular perturbation theory (see \cite{kuhen} and the references therein). To this end, we first have to put system (\ref{eqEqGen}) in the appropriate slow-fast form. At this point we will rely on the intuition of the physicists in the aforementioned papers. This task will be carried out in Section \ref{secSF}. We will analyse the resulting slow-fast system using geometric singular perturbation theory in Section \ref{secGeom}. Armed with this analysis, we will prove our main result in Section \ref{secMain} which provides fine estimates for a heteroclinic solution of (\ref{eqEqGen})-(\ref{eqBdryGen}), as $\Lambda \to 1^+$, expressed in terms of suitable polar coordinates. Lastly, in Section \ref{secEnergy} we will show that this solution coincides with the unique (up to translations) minimizing heteroclic connection of (\ref{eqEqGen})-(\ref{eqBdryGen}), and provide an asymptotic expression for its energy.
 \section{The slow-fast system}\label{secSF}
We let
\begin{equation}\label{eqEpsilon}
\varepsilon=\sqrt{\Lambda-1},
\end{equation}
and consider the slow variable
\begin{equation}\label{eqxz}
x=\varepsilon z.
\end{equation}
In the rest of the paper, unless specified otherwise, we will assume that $\varepsilon>0$.
Then, system (\ref{eqEqGen}) is equivalent to
\begin{equation}\label{eqEqGenEpsilon}
	\left\{
	\begin{array}{rcl}
	\lambda^2 \varepsilon^2 u''&=&u^3-u+  v^2 u+ \varepsilon^2 v^2 u, \\
&&	\\
	  \varepsilon^2v''&=&v^3-v+  u^2 v+ \varepsilon^2 u^2 v,
	\end{array}
	\right.
	\end{equation}
where $'=d/{dx}$ (the relations (\ref{equvPos}) and (\ref{eqBdryGen}) remain the same).
Next, motivated from \cite{barankov,malomed1990domain}, we express $(u,v)$ in polar coordinates as
\begin{equation}\label{eqPolar}
u=R\cos \varphi,\ \ v=R\sin \varphi,
\end{equation}
and write (\ref{eqEqGenEpsilon})-(\ref{equvPos})-(\ref{eqBdryGen}) equivalently as
\[
\begin{split}
\varepsilon^2 \left[R''- R (\varphi')^2\right]=&(R^3-R)\left[1+\left(\frac{1}{\lambda^2}-1 \right)\cos^2 \varphi \right]\\&+\varepsilon^2R^3\left(\frac{1}{\lambda^2}+1 \right)\sin^2\varphi\cos^2\varphi,
\end{split}
\]
\[
\begin{split}
\varepsilon^2 \left(R \varphi''+2  R'\varphi'\right)=&-\left(\frac{1}{\lambda^2}-1 \right)(R^3-R) \sin \varphi \cos \varphi
\\&+\varepsilon^2 R^3\left(\sin \varphi \cos^3 \varphi-\frac{1}{\lambda^2} \cos \varphi \sin^3 \varphi\right);
\end{split}
\]
\[
R>0,\ \ 0<\varphi<\frac{\pi}{2};
\]
\[
R\to 1 \ \ \textrm{as}\ x\to \pm \infty,
\ \
\varphi \to \frac{\pi}{2}\ \textrm{as}\ x\to -\infty,\ \ \varphi \to 0\ \textrm{as}\ x\to +\infty.
\]
Subsequently, we set
\begin{equation}\label{eqR}R=1-\varepsilon^2 w, \end{equation}
and get the equivalent problem:
\[
\begin{split}
-\varepsilon^2w''- (1-\varepsilon^2 w) (\varphi')^2 =&(1-\varepsilon^2 w)(\varepsilon^2w^2-2w)\left[1+\left(\frac{1}{\lambda^2}-1 \right)\cos^2 \varphi \right]\\&+ (1-\varepsilon^2 w)^3\left(\frac{1}{\lambda^2}+1 \right)\sin^2\varphi\cos^2\varphi,
\end{split}
\]
\[
\begin{split}
(1-\varepsilon^2 w) \varphi''-2 \varepsilon^2 w'\varphi'=&\left(1-\frac{1}{\lambda^2} \right)(1-\varepsilon^2 w)(\varepsilon^2w^2-2w) \sin \varphi \cos \varphi
\\&+(1-\varepsilon^2 w)^3\left(\sin \varphi \cos^3 \varphi-\frac{1}{\lambda^2} \cos \varphi \sin^3 \varphi\right);
\end{split}
\]
\[
0<\varphi<\frac{\pi}{2};
\]
\[
w\to 0 \ \ \textrm{as}\ x\to \pm \infty,
\ \
\varphi \to \frac{\pi}{2}\ \textrm{as}\ x\to -\infty,\ \ \varphi \to 0\ \textrm{as}\ x\to +\infty.
\]
Now we can define
\begin{equation}\label{eqTransFi}
w_1=w,\ w_2=\varepsilon w_1',\ \varphi_1= \varphi,\ \varphi_2=\varphi_1',
\end{equation}
and write the problem equivalently  in the following slow-fast form, with $(w_1,w_2)$ being the fast variables and $(\varphi_1,\varphi_2)$ the slow ones:
\begin{equation}\label{eqSF}\left\{\begin{array}{rcl}
    \varepsilon w_1' & = & w_2, \\
      &   &   \\
    \varepsilon w_2' & = & - (1-\varepsilon^2 w_1) \varphi_2^2 -(1-\varepsilon^2 w_1)(\varepsilon^2w_1^2-2w_1)\left[1+\left(\frac{1}{\lambda^2}-1 \right)\cos^2 \varphi_1 \right] \\
      &   & - (1-\varepsilon^2 w_1)^3\left(\frac{1}{\lambda^2}+1 \right)\sin^2\varphi_1\cos^2\varphi_1, \\
      &   &   \\
    \varphi_1' & = & \varphi_2, \\
      &   &   \\
    \varphi_2' & = & \frac{2 \varepsilon w_2\varphi_2}{1-\varepsilon^2 w_1}+\left(1-\frac{1}{\lambda^2} \right)(\varepsilon^2w_1^2-2w_1) \sin \varphi_1 \cos \varphi_1 \\
      &   & +(1-\varepsilon^2 w_1)^2\left(\sin \varphi_1 \cos^3 \varphi_1-\frac{1}{\lambda^2} \cos \varphi_1 \sin^3 \varphi_1\right);
  \end{array}\right.
\end{equation}
\begin{equation}\label{eqPhi1pos}
0<\varphi_1<\frac{\pi}{2};
\end{equation}
\begin{equation}\label{eqSFbdryu}
\left\{\begin{array}{l}
  w_1,\ w_2\to 0 \ \ \textrm{as}\ x\to \pm \infty, \\
    \\
 \varphi_1 \to \frac{\pi}{2}\ \textrm{as}\ x\to -\infty,\ \ \varphi_1 \to 0\ \textrm{as}\ x\to +\infty,\ \ \varphi_2\to 0 \ \textrm{as}\ x\to \pm \infty.
\end{array}\right.
\end{equation}
\subsection{Analysis at the equilibria}\label{subsubLinearization}
It is easy to check that the eigenvalues of the linearization of (\ref{eqSF}) at the equilibria $(0,0,\frac{\pi}{2},0)$ and $(0,0,0,0)$ that we wish to connect are
\begin{equation}\label{eqEVs}
\pm \frac{\sqrt{2}}{\varepsilon}, \  \pm \frac{1}{\lambda}
\ \
\textrm{and}
\ \
\pm \frac{\sqrt{2}}{\lambda \varepsilon}, \  \pm 1,
\end{equation}
respectively. Moreover, as associated eigenfunctions we can choose the following:
\begin{equation}\label{eqEigen}
\left(\pm \frac{1}{\sqrt{2}},1,0,0 \right),\ \left(0,0,\pm \lambda,1 \right)\ \ \textrm{and}\ \ \left(\pm \frac{\lambda}{\sqrt{2}},1,0,0 \right),\ \left(0,0,\pm 1,1 \right),
\end{equation}
respectively.
\section{Geometric singular perturbation theoretic  analysis}\label{secGeom}
Having put the problem in the standard slow-fast form, we can now start analyzing it using geometric singular perturbation theory.
\subsection{The $\varepsilon=0$ limit slow system}\label{subsecSFslow}
The slow-fast system (\ref{eqSF}) is in the so called slow form. Switching back to the variable $z$ (recall (\ref{eqxz})) gives us the corresponding fast form. They are equivalent as long as $\varepsilon$ is positive, but they provide different information when we formally set $\varepsilon=0$. For the problem at hand, we will only need the information that comes from the slow $\varepsilon=0$ limit problem, which is the following:
\begin{equation}\label{eqSFlimit}\left\{\begin{array}{rcl}
    0 & = & w_2, \\
      &   &   \\
    0 & = & -  \varphi_2^2 +2w_1\left[1+\left(\frac{1}{\lambda^2}-1 \right)\cos^2 \varphi_1 \right]  - \left(\frac{1}{\lambda^2}+1 \right)\sin^2\varphi_1\cos^2\varphi_1, \\
      &   &   \\
    \varphi_1' & = & \varphi_2, \\
      &   &   \\
    \varphi_2' & = & -2\left(1-\frac{1}{\lambda^2} \right)w_1 \sin \varphi_1 \cos \varphi_1 +\sin \varphi_1 \cos^3 \varphi_1-\frac{1}{\lambda^2} \cos \varphi_1 \sin^3 \varphi_1.
  \end{array}\right.
\end{equation}
\subsubsection{The critical manifold $\mathcal{M}_0$} The first two equations of (\ref{eqSFlimit}) define the \emph{critical manifold}, which is
\begin{equation}\label{eqCritiqMan}
\mathcal{M}_0=\left\{w_1=\frac{\varphi_2^2 +  \left(\frac{1}{\lambda^2}+1 \right)\sin^2\varphi_1\cos^2\varphi_1}{2\left[1+\left(\frac{1}{\lambda^2}-1 \right)\cos^2 \varphi_1 \right]},\ w_2=0,\ (\varphi_1,\varphi_2)\in \mathbb{R}^2 \right\}.
\end{equation}
\subsubsection{The reduced problem}\label{subsecRedux}
The last two equations of (\ref{eqSFlimit}) define a flow on the critical manifold $\mathcal{M}_0$, which is given by the lifting on $\mathcal{M}_0$ of the trajectories of the following two-dimensional \emph{reduced system}:
\begin{equation}\label{eqRedux}\left\{\begin{array}{rcl}
    \varphi_1' & = & \varphi_2, \\
      &   &   \\
    \varphi_2' & = & -\left(1-\frac{1}{\lambda^2} \right)\left[\frac{\varphi_2^2 +  \left(\frac{1}{\lambda^2}+1 \right)\sin^2\varphi_1\cos^2\varphi_1}{1+\left(\frac{1}{\lambda^2}-1 \right)\cos^2 \varphi_1 }\right] \sin \varphi_1 \cos \varphi_1 \\ & &\\ & &+\sin \varphi_1 \cos^3 \varphi_1-\frac{1}{\lambda^2} \cos \varphi_1 \sin^3 \varphi_1.
  \end{array}\right.
\end{equation}
The form of the above system may be discouraging at first sight, but a closer look reveals that it can be written in the following simple form for $\varphi_1$:
\begin{equation}\label{eqHamReducia}
\frac{d}{dx}\left\{\left[1+\left(\frac{1}{\lambda^2}-1 \right)\cos^2 \varphi_1 \right](\varphi_1')^2 \right\}=\frac{1}{4\lambda^2}\frac{d}{dx}\left\{\sin^2(2\varphi_1) \right\}.
\end{equation}
Then, in view of the asymptotic behaviour (\ref{eqSFbdryu}), the \emph{reduced problem} becomes
\begin{equation}\label{eqRB}
\left\{\begin{array}{l}
         \varphi_1'  =-\frac{1}{2\lambda}\sin(2\varphi_1)\left[1+\left(\frac{1}{\lambda^2}-1 \right)\cos^2 \varphi_1 \right]^{-\frac{1}{2}}, \\
           \\
          \varphi_1 \to \frac{\pi}{2}\ \textrm{as}\ x\to -\infty,\ \ \varphi_1 \to 0\ \textrm{as}\ x\to +\infty.
       \end{array}\right.
\end{equation}
Clearly, the above problem admits a unique solution $\varphi_{1,0}$ such that $\varphi_{1,0}(0)=\frac{\pi}{4}$. Moreover, it holds $\varphi_{2,0}=\varphi_{1,0}'<0$. We note that this limit problem also arose in the $\Gamma$-convergence argument of \cite{goldman2015phase}. The lifting of the orbit $(\varphi_{1,0},\varphi_{2,0})$ on the critical manifold $\mathcal{M}_0$ is called \emph{singular heteroclinic orbit or connection}. We note that $(\frac{\pi}{2},0)$ and $(0,0)$ are saddle equilibria for (\ref{eqRedux}) with corresponding eigenvalues $\pm \frac{1}{\lambda}$ and $\pm 1$, respectively; the associated eigenvectors are $\left(\pm \lambda,1 \right)$ and $\left(\pm 1,1 \right)$, respectively. It is useful to compare with Subsection \ref{subsubLinearization}.
\subsection{The locally invariant manifold $\mathcal{M}_\varepsilon$}
\subsubsection{Normal hyperbolicity of $\mathcal{M}_0$} The critical manifold $\mathcal{M}_0$ corresponds to a two-dimensional manifold of equilibria for the $\varepsilon=0$ limit fast system (recall the discussion in the beginning of Subsection \ref{subsecSFslow}). The associated linearization at such an equilibrium point is
\[
\left( \begin{array}{cccc}
         0 & 1 & 0 & 0 \\
         2+2\left(\frac{1}{\lambda^2}-1 \right)\cos^2 \varphi_1 & 0 & 0 & 0 \\
         0 & 0 & 0 & 0 \\
         0 & 0 & 0 & 0
       \end{array}
\right).
\]
The eigenvalues of this matrix are $\pm \sqrt{2+2\left(\frac{1}{\lambda^2}-1 \right)\cos^2 \varphi_1}$ and zero (double). Therefore, as there are no other eigenvalues on the imaginary axis besides of  zero whose multiplicity is equal to the dimension of $\mathcal{M}_0$, we infer that the critical manifold  $\mathcal{M}_0$ is normally hyperbolic.
\subsubsection{Persistence of $\mathcal{M}_0$ for $0<\varepsilon \ll 1$}\label{subsubPersist}
Since $\mathcal{M}_0$ is normally hyperbolic and a $C^\infty$ graph over the $(\varphi_1,\varphi_2)$ plane, as a particular consequence of Fenichel's first theorem (see \cite{fenichelJDE}, \cite{jones1995geometric} or \cite[Ch. 3]{kuhen}), we deduce that, given an integer $m\geq 1$ and a compact subset $\mathcal{K}$ of the $(\varphi_1,\varphi_2)$ plane, there are functions $h_i(\varphi_1,\varphi_2,\varepsilon)\in C^m\left(\mathcal{K}\times [0,\infty) \right)$, $i=1,2$, and an $\varepsilon_0>0$ so that for $\varepsilon \in (0,\varepsilon_0)$ the graph $\mathcal{M}_\varepsilon$ over $\mathcal{K}$ described by \begin{equation}\label{eqh}w_1=\frac{\varphi_2^2 +  \left(\frac{1}{\lambda^2}+1 \right)\sin^2\varphi_1\cos^2\varphi_1}{2\left[1+\left(\frac{1}{\lambda^2}-1 \right)\cos^2 \varphi_1 \right]}+\varepsilon h_1(\varphi_1,\varphi_2,\varepsilon),\ \ w_2=\varepsilon h_2(\varphi_1,\varphi_2,\varepsilon),\end{equation}
is locally invariant under (\ref{eqSF}). In passing, we note that this property also follows by appending the equation $\dot{\varepsilon}=0$ to the equivalent fast form of (\ref{eqSF}), applying the usual center manifold theorem at each equilibrium on $\mathcal{M}_0\times \{0\}$, and then taking slices for $\varepsilon$ fixed (see \cite[Ch. 2]{berglund2006noise}). As a center-like manifold, $\mathcal{M}_\varepsilon$ is generally not unique. We choose the compact set $\mathcal{K}$ to be the closure of a smooth domain that contains the heteroclinic connection $(\varphi_{1,0}, \varphi_{2,0})$ of the reduced system (\ref{eqRedux}).  The equilibria $(0,0,\frac{\pi}{2},0)$ and $(0,0,0,0)$ of (\ref{eqSF})  lie on $\mathcal{M}_\varepsilon$, that is
\begin{equation}\label{eqEquiv0}h_i\left(\frac{\pi}{2},0,\varepsilon\right)=0,\ \ h_i\left(0,0,\varepsilon\right)=0,\ \ i=1,2,\ \ \varepsilon \in [0,\varepsilon_0).\end{equation} This is because every invariant set of (\ref{eqSF}) in a sufficiently small $\varepsilon$-independent neighborhood of $\mathcal{M}_0$ must be on $\mathcal{M}_\varepsilon$.
\subsubsection{Equivariant aspects of $\mathcal{M}_\varepsilon$}
In this subsection, we will discuss some symmetry properties of $\mathcal{M}_\varepsilon$ that are inherited from (\ref{eqSF}). We point out that these properties will only be used in order to get precise exponents in the exponential decay rates in (\ref{eqturn}). More precisely, we will just use that $\mathcal{M}_\varepsilon$ may be assumed to be tangential to $\mathcal{M}_0$ at either one of the equilibria that we wish to connect (see (\ref{eqEquiv1}) below). Therefore, depending on the reader's preference, this subsection may be skipped at first reading.

We observe that if $(w_1,w_2,\varphi_1,\varphi_2)$ solves (\ref{eqSF}), then so do \begin{equation}\label{eqSymas}(w_1,w_2,-\varphi_1,-\varphi_2)\ \ \textrm{and}\ \ (w_1,w_2,\pi-\varphi_1,-\varphi_2).\end{equation} Then, by further assuming that $\mathcal{K}$ is symmetric with respect to the lines $\varphi_1=0$, $\varphi_1=\frac{\pi}{2}$ and $\varphi_2=0$, the invariant manifold $\mathcal{M}_\varepsilon$ can be constructed so that the flow on it preserves at least one of these two properties. More precisely, we may assume that one of the following identities holds:
\begin{equation}\label{eqEquiv1}h_i\left(-\varphi_1,-\varphi_2,\varepsilon\right)=h_i\left(\varphi_1,\varphi_2,\varepsilon\right)\ \textrm{or} \ h_i\left(\pi-\varphi_1,-\varphi_2,\varepsilon\right)=h_i\left(\varphi_1,\varphi_2,\varepsilon\right),\end{equation}
for $i=1,2$ and $\varepsilon \in [0,\varepsilon_0)$. In any case, we can always assume $h_i(\cdot,\cdot,\varepsilon)$, $i=1,2$, to be even with respect to $\varphi_2$.

This follows from the way that $\mathcal{M}_\varepsilon$ is constructed (see \cite{jones1995geometric}), which we briefly recall. Firstly, one appropriately modifies the last two equations of (\ref{eqSF}) outside of $\mathcal{K}$ and constructs a unique, three-dimensional, positively invariant center-stable manifold for that modified system (note that the last relation in page 67 of the aforementioned reference should be with the opposite sign). Similarly, one constructs a unique, three-dimensional, negatively invariant, center-unstable manifold for an analogous extension of (\ref{eqSF}).
It is easy to see that  these two modifications can be performed
while preserving one of  the symmetries in (\ref{eqSymas}).
In turn, as a consequence of their uniqueness, the corresponding center-stable and center-unstable  manifolds inherit the chosen symmetry. In particular, so does their intersection over $\mathcal{K}$, namely $\mathcal{M}_\varepsilon$. For related arguments, we refer the interested reader to \cite[Sec. 5.7]{chossat2000methods} and \cite[Ap. B]{haragus2010local}.

Let us henceforth assume that the locally invariant manifold $\mathcal{M}_\varepsilon$ enjoys the first symmetry in (\ref{eqSymas}), that is the first relation in (\ref{eqEquiv1}) holds. However, as we will see, the second relation in (\ref{eqEquiv1}) will be a-posteriori satisfied along the heteroclinic orbit on $\mathcal{M}_\varepsilon$ that we will construct in Theorem \ref{thmMan} below.
\section{The main result}\label{secMain}
We are now all set for our main result.
\begin{thm}\label{thmMan}
For each $\varepsilon>0$ sufficiently small, there is a heteroclinic orbit $(w_{1,\varepsilon},w_{2,\varepsilon},\varphi_{1,\varepsilon},\varphi_{2,\varepsilon})$ of (\ref{eqSF}) connecting the equilibria $(0,0,\frac{\pi}{2},0)$ and $(0,0,0,0)$  which lies on $\mathcal{M}_\varepsilon$. More precisely, the following estimates hold:
\begin{equation}\label{eqturn}
\begin{array}{l}
  w_{1,\varepsilon}=\frac{\varphi_{2,\varepsilon}^2 +  \left(\frac{1}{\lambda^2}+1 \right)\sin^2\varphi_{1,\varepsilon}\cos^2\varphi_{1,\varepsilon}}{2\left[1+\left(\frac{1}{\lambda^2}-1 \right)\cos^2 \varphi_{1,\varepsilon} \right]}+\mathcal{O}(\varepsilon)\min\left\{e^{\frac{2x}{\lambda}},e^{-2x} \right\}, \\
    \\
  w_{2,\varepsilon}=\mathcal{O}(\varepsilon)\min\left\{e^{\frac{2x}{\lambda}},e^{-2x} \right\}, \\
    \\
  \varphi_{i,\varepsilon}=\varphi_{i,0}+\mathcal{O}(\varepsilon)\min\left\{e^{\frac{x}{\lambda}},e^{-x} \right\},\ \ i=1,2,
\end{array}
\end{equation}
uniformly in $\mathbb{R}$, as $\varepsilon \to 0$. Moreover, it holds \begin{equation}\label{eqThmMonot}\varphi_{2,\varepsilon}<0.\end{equation}
\end{thm}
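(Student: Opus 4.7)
My plan is to work on the restricted flow on the persistent invariant manifold $\mathcal{M}_\varepsilon$ of Subsection \ref{subsubPersist}. After substituting the graph (\ref{eqh}) into (\ref{eqSF}), this flow becomes a planar system in $(\varphi_1,\varphi_2)$ of the form $\varphi_1' = \varphi_2$, $\varphi_2' = G(\varphi_1,\varphi_2;\varepsilon)$ that is a smooth $\varepsilon$-perturbation of the reduced system (\ref{eqRedux}). Both equilibria lie on $\mathcal{M}_\varepsilon$ by (\ref{eqEquiv0}) and remain hyperbolic saddles with $\varepsilon$-independent slow eigenvalues $\pm 1/\lambda$ and $\pm 1$ (cf.\ (\ref{eqEVs})), so their stable and unstable manifolds inside $\mathcal{M}_\varepsilon$ are one-dimensional. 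To construct the heteroclinic I will exploit the first integral inherited from the Hamiltonian identity (\ref{eqHam}): rewriting (\ref{eqHam}) under the successive changes of variables (\ref{eqPolar}), (\ref{eqR}) and (\ref{eqTransFi}) gives a function $\tilde H(w_1,w_2,\varphi_1,\varphi_2;\varepsilon)$ that is constant along every solution of (\ref{eqSF}), and its pullback to $\mathcal{M}_\varepsilon$ via (\ref{eqh}) is a smooth first integral of the planar flow reducing, at $\varepsilon=0$, to the integral implicit in (\ref{eqHamReducia}). Since both saddles sit on the zero level set and the reduced singular heteroclinic is the branch of $\{\tilde H|_{\mathcal{M}_0}=0\}$ joining them, the analogous branch of $\{\tilde H|_{\mathcal{M}_\varepsilon}=0\}$ persists as a regular arc joining the two saddles for all $\varepsilon>0$ sufficiently small (Melnikov-type persistence of saddle-saddle connections in the presence of an inherited conservation law).

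For the estimates (\ref{eqturn}), the first two lines follow from the graph representation (\ref{eqh}) once $\varepsilon h_i(\varphi_{1,\varepsilon},\varphi_{2,\varepsilon},\varepsilon)$ is shown to be $\mathcal{O}(\varepsilon)\min\{e^{2x/\lambda},e^{-2x}\}$. Combining (\ref{eqEquiv0}) with the equivariance properties discussed around (\ref{eqEquiv1})---namely that $h_i$ is even in $\varphi_2$ and enjoys at least one of the reflection symmetries $\varphi_1\to-\varphi_1$ or $\varphi_1\to\pi-\varphi_1$, possibly chosen differently in a neighborhood of each of the two equilibria by selecting different Fenichel realizations of $\mathcal{M}_\varepsilon$---forces $h_i$ to vanish to second order at both equilibria, yielding $|h_i(\varphi_1,\varphi_2,\varepsilon)|\le C\bigl[(\varphi_1-\pi/2)^2+\varphi_2^2\bigr]$ near $(\pi/2,0)$ and $|h_i(\varphi_1,\varphi_2,\varepsilon)|\le C(\varphi_1^2+\varphi_2^2)$ near $(0,0)$. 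The linearized approach rates at the two saddles give $|\varphi_{1,\varepsilon}-\pi/2|,\,|\varphi_{2,\varepsilon}|=\mathcal{O}(e^{x/\lambda})$ as $x\to-\infty$ and $|\varphi_{1,\varepsilon}|,\,|\varphi_{2,\varepsilon}|=\mathcal{O}(e^{-x})$ as $x\to+\infty$; squaring and taking the minimum produces the claimed weight. The third line $\varphi_{i,\varepsilon}=\varphi_{i,0}+\mathcal{O}(\varepsilon)\min\{e^{x/\lambda},e^{-x}\}$ follows from a Gronwall comparison of the flows on $\mathcal{M}_\varepsilon$ and $\mathcal{M}_0$ (with both orbits normalized by, e.g., $\varphi_{1,\varepsilon}(0)=\varphi_{1,0}(0)=\pi/4$), treated separately on each half-line and matched at $x=0$, using the hyperbolic structure to absorb the $\mathcal{O}(\varepsilon)$ vector-field error into exponentially decaying tails. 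The monotonicity (\ref{eqThmMonot}) then follows because $\varphi_{2,0}<0$ strictly on $\mathbb{R}$ (with $\varphi_{2,0}\to 0$ only at $\pm\infty$) and the $\mathcal{O}(\varepsilon)$ correction is pointwise dominated by this strict negativity on compact intervals, while near each equilibrium the orbit enters/exits along the correct separatrix, whose direction $(\pm\lambda,1)$ or $(\pm 1,1)$ forces $\varphi_2$ to carry the required sign.

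The principal obstacle I anticipate is the existence step: one must verify that the level set $\{\tilde H|_{\mathcal{M}_\varepsilon}=0\}$ contains an actual arc joining the two saddles within the physical region $0<\varphi_1<\pi/2$, rather than breaking into disconnected separatrices or closing into a homoclinic-type loop. This is a transversality claim for the inherited first integral at the reduced singular heteroclinic, which I expect to follow from the compactness of the singular orbit inside the interior of $\mathcal{K}$ together with the strict sign $\varphi_{2,0}<0$, but it will need to be checked carefully using the explicit structure of $\tilde H$ coming from (\ref{eqHam}).
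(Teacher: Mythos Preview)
Your proposal is correct and follows essentially the same strategy as the paper. Both arguments restrict to $\mathcal{M}_\varepsilon$, use the Hamiltonian identity (\ref{eqHam}) as the extra relation that pins down the one-dimensional stable and unstable manifolds inside $\mathcal{M}_\varepsilon$, invoke the equivariance (\ref{eqEquiv1}) (switching Fenichel realizations near the two equilibria) to obtain quadratic vanishing of $h_i$ and hence the weights $\min\{e^{2x/\lambda},e^{-2x}\}$, and establish (\ref{eqThmMonot}) by perturbing off the strict inequality $\varphi_{2,0}<0$ together with the separatrix directions near the saddles.

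The only packaging difference is in the existence step: where you phrase it as persistence of the zero level set of the pulled-back first integral $\tilde H|_{\mathcal{M}_\varepsilon}$, the paper implements the same idea concretely by introducing the map $F=(\text{graph equations},H)$ and applying the implicit function theorem at the cross-section $\varphi_1=\pi/4$; the invertibility of $\partial_{w_1,w_2,\varphi_2}F$ computed in (\ref{eqital}) is precisely the transversality you flag as your ``principal obstacle''. For the $\varphi_{i,\varepsilon}-\varphi_{i,0}$ estimate, the paper does not use a naive Gronwall (which would blow up) but rather the refined stable-manifold theorem of Coddington--Levinson applied to the rescaled difference $\Psi_\varepsilon=(\varphi_\varepsilon-\varphi_0)/\varepsilon$; your phrase ``using the hyperbolic structure to absorb the $\mathcal{O}(\varepsilon)$ error into exponentially decaying tails'' is the right idea, but be aware that an explicit dichotomy/variation-of-constants argument is needed, not Gronwall alone.
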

\begin{proof}
In light of the analysis in Subsection \ref{subsubLinearization}, each of the two equilibria has a two-dimensional (global) stable and unstable manifold, which is tangent at that point to the corresponding two-dimensional eigenspace in (\ref{eqEigen}). Let us call them $W_\varepsilon^s(0,0,\frac{\pi}{2},0)$, $W_\varepsilon^u(0,0,\frac{\pi}{2},0)$ and
 $W_\varepsilon^s(0,0,0,0)$, $W_\varepsilon^u(0,0,0,0)$.
The first two eigenvalues in  each relation of (\ref{eqEVs}) correspond to motion normal to $\mathcal{M_\varepsilon}$, while the latter two correspond to motion on $\mathcal{M_\varepsilon}$. The dynamical system within $\mathcal{M_\varepsilon}$ therefore has a saddle point at each of these equilibria, with one-dimensional stable and unstable manifolds given by $W_\varepsilon^s(0,0,\frac{\pi}{2},0)\cap \mathcal{M_\varepsilon}$,  $W_\varepsilon^u(0,0,\frac{\pi}{2},0)\cap \mathcal{M_\varepsilon}$ and
 $W_\varepsilon^s(0,0,0,0)\cap \mathcal{M_\varepsilon}$, $W_\varepsilon^u(0,0,0,0)\cap \mathcal{M_\varepsilon}$. Our goal is to show that $W_\varepsilon^u(0,0,\frac{\pi}{2},0)\cap \mathcal{M_\varepsilon}$ and $W_\varepsilon^s(0,0,0,0)\cap \mathcal{M_\varepsilon}$ meet. Thus, since they are one-dimensional, they have to coincide.

 We begin by deriving the equations on $\mathcal{M}_\varepsilon$. By virtue of (\ref{eqh}),  the flow of (\ref{eqSF}) on $\mathcal{M}_\varepsilon$ is determined by a smooth, for $\varepsilon \in [0,\varepsilon_0)$, $\mathcal{O}(\varepsilon)$-regular perturbation of the reduced system (\ref{eqRedux}). We will refer to this  as the \emph{$\varepsilon$-reduced system}.
  Thanks to (\ref{eqEquiv0}), the points $(\frac{\pi}{2},0)$ and $(0,0)$ are saddles for the $\varepsilon$-reduced system with associated linearized eigenvalues and eigenfunctions given by smooth $\mathcal{O}(\varepsilon)$-regular perturbations, for $\varepsilon \in [0,\varepsilon_0)$, of the corresponding ones at the end of Subsection \ref{subsecRedux}. Actually, as we have assumed the validity of the first condition in (\ref{eqEquiv1}), the corresponding linearization at $(0,0)$ is independent of $\varepsilon \in [0,\varepsilon_0)$. Our interest will be in the unstable manifold $W^u_\varepsilon(\frac{\pi}{2},0)$ of $(\frac{\pi}{2},0)$ and in the stable manifold $W^s_\varepsilon (0,0)$  of $(0,0)$. In fact, these are the projections to the $(\varphi_1,\varphi_2)$ plane of $W_\varepsilon^u(0,0,\frac{\pi}{2},0)\cap \mathcal{M_\varepsilon}$ and $W_\varepsilon^s(0,0,0,0)\cap \mathcal{M_\varepsilon}$, respectively.

     The manifolds $W^u_\varepsilon(\frac{\pi}{2},0)$ and  $W^s_\varepsilon (0,0)$ depend smoothly on $\varepsilon \in [0,\varepsilon_0)$ (see for instance \cite[Ch. 9]{teschl2012ordinary}). From now on, with this notation, we will only refer to the parts of these invariant manifolds that shadow the heteroclinic orbit $(\varphi_{1,0}, \varphi_{2,0})$. Then, $W^u_\varepsilon(\frac{\pi}{2},0)$ and  $W^s_\varepsilon (0,0)$ intersect the line $\phi_1=\frac{\pi}{4}$ at the points $(\frac{\pi}{4},\phi_{2,\varepsilon}^-)$
  and $(\frac{\pi}{4},\phi_{2,\varepsilon}^+)$, respectively, such that
  \begin{equation}\label{eqContradict0}
\phi_{2,\varepsilon}^\pm-\varphi_{2,0}(0)=\mathcal{O}(\varepsilon)\ \ \textrm{as}\ \varepsilon \to 0,
  \end{equation}
  (recall Subsection \ref{subsecRedux}).
   Let $\left(w_{1,\varepsilon}^-, w_{2,\varepsilon}^-,\frac{\pi}{4},\phi_{2,\varepsilon}^-\right)$ and $\left(w_{1,\varepsilon}^+, w_{2,\varepsilon}^+,\frac{\pi}{4},\phi_{2,\varepsilon}^+\right)$, respectively,  be their lifting to $\mathcal{M}_\varepsilon$ for $\varepsilon \in [0,\varepsilon_0)$. The values $w_{i,\varepsilon}^\pm$, $i=1,2$, depend smoothly on $\varepsilon \in [0,\varepsilon_0)$; in particular, it holds
    \begin{equation}\label{eqContra2}
    w_{i,\varepsilon}^\pm-w_{i,0}=\mathcal{O}(\varepsilon),\ i=1,2,\  \textrm{as}\ \varepsilon \to 0,
    \end{equation}
    where $\left(w_{1,0},w_{2,0} \right)$ is the image of $\left(\frac{\pi}{4}, \varphi_{2,0}(0) \right)$ on the graph of $\mathcal{M}_0$.
    We will show that \begin{equation}\label{eqMeet}w_{i,\varepsilon}^-=w_{i,\varepsilon}^+,\ i=1,2,\ \textrm{and}\ \ \phi_{2,\varepsilon}^-=\phi_{2,\varepsilon}^+ ,\end{equation}
  provided that $\varepsilon>0$ is sufficiently small.

  Notice that we want to determine uniquely three variables, although (\ref{eqh}) furnishes only two equations. The third equation will be provided by the hamiltonian identity (\ref{eqHam}) (see also \cite{alikakos2007singular} for a related argument in a simpler problem).
Taking into account (\ref{eqEpsilon}), (\ref{eqxz}), (\ref{eqPolar}), (\ref{eqR}), and dividing by $\varepsilon^2/2$, we find that the identity (\ref{eqHam}) becomes
 \begin{equation}\label{eqRHS}\begin{split}0
 =& \lambda^2\left[\varepsilon^2 w_2^2 \cos ^2 \varphi_1+(1-\varepsilon^2 w_1)^2\varphi_2^2 \sin^2\varphi_1+\varepsilon w_2(1-\varepsilon^2 w_1)\sin 2 \varphi_1 \right]
  \\
 & +\varepsilon^2 w_2^2 \sin ^2 \varphi_1+(1-\varepsilon^2 w_1)^2\varphi_2^2 \cos^2\varphi_1-\varepsilon w_2 (1-\varepsilon^2 w_1) \sin2 \varphi_1 \\
 & -\frac{\varepsilon^2}{2}(2w_1-\varepsilon^2 w_1^2)^2-\frac{1}{4}(1-\varepsilon^2 w_1)^4 \sin^22 \varphi_1,
\end{split}
\end{equation}
which is valid along trajectories of (\ref{eqSF}) on either one of $W_\varepsilon^{s/u}\left(0,0,\frac{\pi}{2},0\right)$ or $W_\varepsilon^{s/u}\left(0,0,0,0\right)$, for $\varepsilon>0$.
Moreover, it will be important in the sequel to observe that, thanks to (\ref{eqHamReducia}), the above identity continues to hold for $\varepsilon=0$, i.e., along $(\varphi_{1,0},\varphi_{2,0})$.

We consider the smooth map $F: \mathbb{R}^2 \times \mathcal{K}\times [0,\infty)\to \mathbb{R}^3$ defined by
\[F\left(\begin{array}{c}
     w_1 \\
     w_2  \\
     \varphi_1 \\
     \varphi_2 \\
     \varepsilon
   \end{array}\right)
=\left(\begin{array}{c}
   w_1-\frac{\varphi_2^2 +  \left(\frac{1}{\lambda^2}+1 \right)\sin^2\varphi_1\cos^2\varphi_1}{2\left[1+\left(\frac{1}{\lambda^2}-1 \right)\cos^2 \varphi_1 \right]}-\varepsilon h_1(\varphi_1,\varphi_2,\varepsilon) \\
      \\
    w_2-\varepsilon h_2(\varphi_1,\varphi_2,\varepsilon) \\
      \\
    H(w_1,w_2,\varphi_1,\varphi_2, \varepsilon)
  \end{array}\right),
\]
where $H$ is the function defined by the righthand side of (\ref{eqRHS}).
We observe that
\begin{equation}\label{eqContraFinale}F\left(w_{1,\varepsilon}^\pm,w_{2,\varepsilon}^\pm,\frac{\pi}{4},\phi_{2,\varepsilon}^\pm,\varepsilon \right)=(0,0,0),\ \ \varepsilon \in (0,\varepsilon_0).\end{equation}
Furthermore, it holds \begin{equation}\label{eqContra3}F\left(w_{1,0},w_{2,0},\frac{\pi}{4},\phi_{2,0}(0),0 \right)=(0,0,0).\end{equation}
Moreover, it follows readily that
\begin{equation}\label{eqital}
\partial_{w_1,w_2,\varphi_2}F\left(\begin{array}{c}
     w_1 \\
     w_2  \\
     \varphi_1 \\
     \varphi_2 \\
     0
   \end{array}\right)=\left(\begin{array}{ccc}
                              1 & 0 & -\frac{\varphi_2}{1+\left(\frac{1}{\lambda^2}-1 \right)\cos^2 \varphi_1} \\
                              0 & 1 & 0 \\
                              0 & 0 & \lambda^2 \varphi_2 \sin^2\varphi_1 +\varphi_2 \cos^2\varphi_1
                            \end{array}
    \right).
\end{equation}
In particular,  this matrix is invertible at the point $\left(w_{1,0},w_{2,0},\frac{\pi}{4},\varphi_{2,0}(0),0 \right)$.
Thus, recalling (\ref{eqContra3}), we deduce by the implicit function theorem that there exists $\delta>0$ such that, for $\varphi_1 \in \left(\frac{\pi}{4}-\delta,\frac{\pi}{4}+\delta \right)$ and $\varepsilon \in [0, \delta)$, the equation
\[
F\left(w_1,w_2,\varphi_1,\varphi_2,\varepsilon \right)=(0,0,0)
\]
has a unique solution $(w_1,w_2,\varphi_2)$ such that $|w_i-w_{i,0}|<\delta$, $i=1,2$, and $\left|\varphi_2-\varphi_{2,0}(0) \right|<\delta$. Hence, applying this property for $\varphi_1=\frac{\pi}{4}$, we infer from (\ref{eqContradict0}), (\ref{eqContra2}) and (\ref{eqContraFinale}) that the desired relation
 (\ref{eqMeet}) is true, provided that $\varepsilon>0$ is sufficiently small.

Let $\left(w_{1,\varepsilon},w_{2,\varepsilon},\varphi_{1,\varepsilon},\varphi_{2,\varepsilon} \right)$
denote the heteroclinic connection of (\ref{eqSF}), (\ref{eqSFbdryu}) on $\mathcal{M}_\varepsilon$ which passes through the point
$\left(w_{1,\varepsilon}^+,w_{2,\varepsilon}^+,\frac{\pi}{4},\phi_{2,\varepsilon}^+ \right)$ at $x=0$. We will first establish the validity of properties (\ref{eqPhi1pos}) and (\ref{eqThmMonot}). For this purpose, we recall that the trajectory curve of $(\varphi_{1,\varepsilon},\varphi_{2,\varepsilon})$ on the $(\varphi_{1},\varphi_{2})$ phase plane is  given by $W_\varepsilon^u\left(\frac{\pi}{2},0\right)\cap W^s_\varepsilon(0,0)$, and varies smoothly for $\varepsilon \geq 0$ small.  The asserted properties now follow at once from the fact that the limiting curve $W_0^u\left(\frac{\pi}{2},0\right)\cap W^s_0(0,0)$ is
contained in the half-strip $\mathcal{S}=\left\{ 0\leq \varphi_1 \leq \frac{\pi}{2},\ \ \varphi_2\leq 0 \right\}$, and touches the boundary of $\mathcal{S}$ only at $\left(0,0 \right)$ and $\left(\frac{\pi}{2},0 \right)$ in a non-tangential manner
(keep in mind the linearized analysis from the end of Subsection \ref{subsecRedux}).

We next turn our attention to the last relation in (\ref{eqturn}). We will first show it for $x\geq 0$. To this end, we will need the preliminary estimates
\begin{equation}\label{eqPrelim}
\varphi_{i,\varepsilon}(x) =(-1)^{i-1}a_+ \left(1+o(1)\right) e^{-x},\ \ i=1,2, \ \ \textrm{as}\ \  x\to +\infty,
\end{equation}
where the constant $a_+> 0$ is independent of small $\varepsilon >0$, and these limits hold uniformly with respect to $\varepsilon$. The above relation follows directly from the refined version of the stable manifold theorem in \cite[Thm. 4.3]{coddington1955theory}; recall that the linearization of the $\varepsilon$-reduced system at $(0,0)$ has eigenvalues $\pm 1$ for $\varepsilon \geq 0$ small. The latter property about the linearized problem  implies that the pair $\Psi_\varepsilon=(\psi_{1,\varepsilon},\psi_{2,\varepsilon})$, where
\[
\psi_{i,\varepsilon}=\frac{\varphi_{i,\varepsilon}-\varphi_{i,0}}{\varepsilon},\ \ i=1,2,
\]
satisfies the following:
\[\left\{\begin{array}{l}
    \Psi_\varepsilon'=A\Psi_\varepsilon+\mathcal{O}\left(\varepsilon|\Psi_\varepsilon|^2\right)+\mathcal{O}\left(\varphi_{1,\varepsilon}^2+\varphi_{2,\varepsilon}^2 \right),\ \ x\geq 0; \\
      \\
    \Psi_\varepsilon(0)=\mathcal{O}(1),\ \ \Psi_\varepsilon(\infty)=0,
  \end{array}\right.
\]
with the obvious notation, uniformly as $\varepsilon \to 0$, where $A$ is the aforementioned linearized matrix (recall also (\ref{eqContradict0})).
Then,  by using (\ref{eqPrelim}) to estimate the last term in the righthand side and by working as in the previously mentioned stable manifold theorem in \cite{coddington1955theory}, we obtain that
\[
\left|\Psi_\varepsilon(x)\right|\leq Ce^{-x},\ \ x\geq 0,
\]
for some constant $C>0$ independent of small $\varepsilon>0$, which implies the validity of the last relation of (\ref{eqturn}) for $x\geq 0$. In turn, the corresponding estimates in the first two relations of (\ref{eqturn}) follow at once via the second identity in (\ref{eqEquiv0}) and the first one in (\ref{eqEquiv1}).

 The sole obstruction in showing the corresponding estimates for $x\leq 0$ is that the linearization of the $\varepsilon$-reduced system at $\left(\frac{\pi}{2},0 \right)$ is not independent of $\varepsilon$ (recall that we could only choose one of the symmetries in (\ref{eqSymas})). Nevertheless, this can be surpassed easily by noting that the constructed heteroclinic connection of (\ref{eqSF}) on $\mathcal{M}_\varepsilon$ should also be on an analogous invariant manifold $\tilde{\mathcal{M}}_\varepsilon$ which enjoys the second symmetry in (\ref{eqSymas}) (recall the concluding remark in Subsection \ref{subsubPersist}), provided that $\varepsilon>0$ is sufficiently small. Then, the arguments for $x\leq 0$ go through as before. In passing, we note that the graphs of $\mathcal{M}_\varepsilon$ and $\tilde{\mathcal{M}}_\varepsilon$ over $\mathcal{K}$ have the same expansion in powers of $\varepsilon$ up to any order (see \cite[Ch. 3]{kuhen} for more details).

 The proof of the theorem is complete.
\end{proof}
\begin{rem}
We suspect that the calculation in (\ref{eqital}) provides the required nondegeneracy condition in
\cite[Sec. 5]{mackay2004slow}
which allows to choose $\mathcal{M}_\varepsilon$ so that the corresponding $\varepsilon$-reduced system is hamiltonian (in $p=\cos \phi_1$, $q=\sin \phi_1$).
\end{rem}

\begin{rem} From the invariance of $\mathcal{M}_\varepsilon$ and the equation $w_2=\varepsilon w_1'$, via the second equation of (\ref{eqh}), we obtain  that
\[\begin{array}{rcl}
    \frac{w_{2,\varepsilon}}{\varepsilon} & = & -2\left(1-\frac{1}{\lambda^2} \right)\varphi_{2,\varepsilon}\sin \varphi_{1,\varepsilon} \cos \varphi_{1,\varepsilon}\frac{\varphi_2^2 +  \left(\frac{1}{\lambda^2}+1 \right)\sin^2\varphi_{1,\varepsilon}\cos^2\varphi_{1,\varepsilon}}{\left[1+\left(\frac{1}{\lambda^2}-1 \right)\cos^2 \varphi_{1,\varepsilon} \right]^2} \\
      &   & +\frac{\varphi_{2,\varepsilon}}{\left[1+\left(\frac{1}{\lambda^2}-1 \right)\cos^2 \varphi_{1,\varepsilon} \right]}\left(\sin \varphi_1 \cos^3 \varphi_1-\frac{1}{\lambda^2} \cos \varphi_1 \sin^3 \varphi_1\right) \\
      &   & +\frac{1}{2}\left(1+\frac{1}{\lambda^2} \right)\varphi_{2,\varepsilon}\frac{\sin 2\varphi_{1,\varepsilon}-4\cos\varphi_{1,\varepsilon}\sin^3\varphi_{1,\varepsilon}}{1+\left(\frac{1}{\lambda^2}-1 \right)\cos^2 \varphi_{1,\varepsilon} }  \\
      & & +\mathcal{O(\varepsilon)}\min\left\{e^{\frac{2x}{\lambda}},e^{-2x} \right\},
  \end{array}
\]
uniformly in $\mathbb{R}$ as $\varepsilon \to 0$. Analogously, we can refine the $w_1$ component of the constructed heteroclinic. Then, plugging these refinements in the $\varepsilon$-reduced system, we can refine the $\varphi_1,\varphi_2$ components too (by the solution of a linear inhomogeneous problem), and so on. We note, however, that formally the correct spatial decay in the above relation should be $\min\left\{e^{\frac{3x}{\lambda}},e^{-3x} \right\}$. This observation points in the direction that   $\mathcal{M}_\varepsilon$ should  be close beyond all orders of $\varepsilon$ to $\mathcal{M}_0$ at the two equilibria (recall the proofs of the corresponding decay estimates in (\ref{eqturn}) and the concluding remark in the proof of Theorem \ref{thmMan}).
\end{rem}
\section{Further properties of the constructed heteroclinic connection}\label{secEnergy}
\subsection{Variational characterization}
In view of (\ref{eqThmMonot})
and the comments leading to (\ref{eqARCT}),  we expect that the corresponding solution to (\ref{eqEqGen})-(\ref{eqBdryGen}), provided by Theorem \ref{thmMan} via the transformations (\ref{eqEpsilon}), (\ref{eqxz}), (\ref{eqPolar}), (\ref{eqR}) and (\ref{eqTransFi}),  minimizes the associated energy.
By the uniqueness result of \cite{aftalionSourdis} that we mentioned in the introduction, to verify this, it suffices to show that one of its components satisfies the corresponding monotonicity property in (\ref{eqmonotS}).
For this purpose,   we note that
\[
u'=-\varepsilon w_2 \cos\varphi_1- (1-\varepsilon^2 w_1)\varphi_2\sin\varphi_1.
\]
Hence, by virtue of (\ref{eqturn}) and (\ref{eqThmMonot}), given any fixed interval $I$, it holds $u'>0$ in $I$ for sufficiently small $\varepsilon>0$.
 We infer that $u'>0$ outside of $I$ by means of (\ref{eqPrelim}) (and the analogous relation for $x\leq 0$).
 Alternatively, similarly to \cite{aftalionSourdis}, we just have to fix a sufficiently large $I$ so that we can apply the maximum principle componentwise in the linear elliptic system for $u',v'$ in $\mathbb{R}\setminus I$ (note that such an interval can be chosen to be independent of $\varepsilon$).

\subsection{Energy expansion}
By exploiting the above observation and making mild use of the estimates in Theorem \ref{thmMan}, we are in position to give an asymptotic expression for the minimal energy of the heteroclinic connection problem (\ref{eqEqGen})-(\ref{eqBdryGen}) as $\Lambda \to 1^+$. The limiting value of the minimal energy, appropriately renormalized (so that it does not converge to zero), was identified rigorously very recently in \cite{goldman2015phase}, using the variational technique of $\Gamma$-convergence. We recover their result but also provide a rate of convergence to this minimal value.

\begin{pro}\label{proEnergy}
Let
\[
\sigma_\Lambda=\inf_\mathcal{X} E_\Lambda(u,v),
\]
where
\[
E_\Lambda(u,v)=\int_{-\infty}^{\infty}\left[\lambda^2\frac{(\dot{u})^2}{2}+\frac{(\dot{v})^2}{2}+\frac{(1-u^2-v^2)^2}{4}+\frac{\Lambda-1}{2}u^2v^2 \right]dz
\]
and
\[
\mathcal{X}=\left\{(u, v)\in W^{1,2}_{loc}(\mathbb{R})\times W^{1,2}_{loc}(\mathbb{R}) \ \textrm{satisfying}\ (\ref{eqBdryGen}) \right\}.
\]
It holds
\[
\sigma_\Lambda= \frac{1}{3} \frac{1-\lambda^3}{1-\lambda^2} (\Lambda-1)^{\frac{1}{2}} +\mathcal{O}\left(\Lambda-1 \right)\ \ \textrm{as}\ \Lambda \to 1^+,
\]
where $\varphi_{1,0}$ is the prescribed solution of (\ref{eqRB}) (with the obvious meaning for $\lambda=1$).
\end{pro}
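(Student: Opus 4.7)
The plan goes as follows. By the discussion in Subsection 5.1, the heteroclinic furnished by Theorem \ref{thmMan} (inverted through the transformations (\ref{eqEpsilon}), (\ref{eqxz}), (\ref{eqPolar}), (\ref{eqR})) is a minimizer of $E_\Lambda$ over $\mathcal{X}$, so $\sigma_\Lambda=E_\Lambda(u_\varepsilon,v_\varepsilon)$. The first reduction comes from the Hamiltonian identity (\ref{eqHam}), which along any solution forces the kinetic part to equal the potential part plus the coupling, and therefore doubles the potential:
\[
E_\Lambda(u_\varepsilon,v_\varepsilon)=\int_{-\infty}^{\infty}\left[\frac{(1-u^2-v^2)^2}{2}+(\Lambda-1)u^2v^2\right]dz.
\]
Writing $1-u^2-v^2=\varepsilon^2 w(2-\varepsilon^2 w)$ and $u^2v^2=\tfrac{R^4}{4}\sin^2(2\varphi)$ and passing to the slow variable $x=\varepsilon z$ transforms this into
\[
E_\Lambda(u_\varepsilon,v_\varepsilon)=\frac{\varepsilon^3}{2}\int_{-\infty}^{\infty}w_{1,\varepsilon}^2(2-\varepsilon^2 w_{1,\varepsilon})^2\,dx+\frac{\varepsilon}{4}\int_{-\infty}^{\infty}(1-\varepsilon^2 w_{1,\varepsilon})^4\sin^2(2\varphi_{1,\varepsilon})\,dx.
\]

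Next, I would extract the leading-order contribution. The estimates (\ref{eqturn}) supply uniform exponentially decaying envelopes ($\min\{e^{2x/\lambda},e^{-2x}\}$ for $w_{1,\varepsilon}$, and $\min\{e^{x/\lambda},e^{-x}\}$ for $\varphi_{1,\varepsilon}-\varphi_{1,0}$), which serve as integrable $\varepsilon$-independent majorants on $\mathbb{R}$. Consequently the first integral is $\mathcal{O}(1)$, contributing a harmless $\mathcal{O}(\varepsilon^3)$ term to $E_\Lambda$, while in the second integral the replacements $(1-\varepsilon^2 w_{1,\varepsilon})^4=1+\mathcal{O}(\varepsilon^2)$ and $\sin^2(2\varphi_{1,\varepsilon})=\sin^2(2\varphi_{1,0})+\mathcal{O}(\varepsilon)\min\{e^{x/\lambda},e^{-x}\}$ (via the mean value theorem) introduce an aggregate error of size $\mathcal{O}(\varepsilon)$ in the integral. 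Altogether,
\[
\sigma_\Lambda=\frac{\varepsilon}{4}\int_{-\infty}^{\infty}\sin^2(2\varphi_{1,0})\,dx+\mathcal{O}(\varepsilon^2).
\]

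The final step is to evaluate the remaining integral explicitly. From (\ref{eqRB}) we have $\sin(2\varphi_{1,0})=-2\lambda\,\varphi_{1,0}'\sqrt{1+(\tfrac{1}{\lambda^2}-1)\cos^2\varphi_{1,0}}$. Substituting this for one factor of $\sin(2\varphi_{1,0})$ and then performing the successive changes of variable $\psi=\varphi_{1,0}(x)$ (monotone from $\pi/2$ to $0$) and $s=\cos^2\psi$ yields
\[
\int_{-\infty}^{\infty}\sin^2(2\varphi_{1,0})\,dx=2\lambda\int_0^1\sqrt{1+\bigl(\tfrac{1}{\lambda^2}-1\bigr)s}\,ds=\frac{4(1-\lambda^3)}{3(1-\lambda^2)}.
\]
Multiplying by $\varepsilon/4$ and recalling $\varepsilon=(\Lambda-1)^{1/2}$ delivers the stated expansion; the $\lambda=1$ case is recovered by continuous extension, the rational factor tending to $3/2$. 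The bookkeeping is the bulk of the argument; the only mild obstacle is the interchange of limit and integral on the unbounded domain $\mathbb{R}$, handled throughout by the uniform exponentially decaying envelopes from Theorem \ref{thmMan}.
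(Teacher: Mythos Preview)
Your proof is correct and follows essentially the same route as the paper: use the Hamiltonian identity to reduce $E_\Lambda$ to the potential part, pass to slow variables, invoke the decay estimates (\ref{eqturn}) to isolate $\frac{\varepsilon}{4}\int\sin^2(2\varphi_{1,0})\,dx$, and then evaluate this integral via (\ref{eqRB}) and the substitution $s=\cos^2\psi$. Your write-up is simply more explicit about the intermediate bookkeeping (integrable majorants, the split into two integrals) than the paper's terse appeal to (\ref{eqRHS}).
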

\begin{proof}
It follows from (\ref{eqRHS}), paying attention to the  comment leading to it, that
\[
\sigma_\Lambda= \frac{1}{4} \left(\int_{-\infty}^{\infty}\sin^2\left(2\varphi_{1,0} \right)dx\right) (\Lambda-1)^{\frac{1}{2}} +\mathcal{O}\left(\Lambda-1 \right)\ \ \textrm{as}\ \Lambda \to 1^+.
\]
It therefore remains to compute the above integral. Using (\ref{eqHamReducia}), we find that
\[\begin{split}
\int_{-\infty}^{\infty}\sin^2\left(2\varphi_{1,0}\right) dx=& -2\lambda\int_{-\infty}^{\infty}\sin\left(2\varphi_{1,0}\right)\left[1+\left(\frac{1}{\lambda^2}-1\right)\cos^2\varphi_{1,0} \right]^\frac{1}{2}\varphi_{1,0}'dx \\
=&2\lambda \int_{0}^{1}\left[1+\left(\frac{1}{\lambda^2}-1\right)t \right]^\frac{1}{2}dt \\
=& \frac{4}{3}\frac{1-\lambda^3}{1-\lambda^2},
 \end{split}
\]
which implies the assertion of the proposition.
\end{proof}
\section*{Acknowledgments} I would like to thank Prof. Aftalion for bringing this problem to my attention and for useful discussions. Moreover, I would like to thank Prof. Scheel for bringing the paper \cite{van2000domain} to my attention, from the references therein I also found out about \cite{malomed1990domain} and \cite{van1997domain}. This project has received funding from the European Union's Horizon 2020 research and innovation programme under the Marie Sk\l{}odowska-Curie grant agreement No 609402-2020 researchers: Train to Move (T2M).

\bibliographystyle{plain}
\bibliography{biblioaacs}
\end{document}